\newtheorem{theorem}{Theorem}[section]
\newtheorem{lemma}{Lemma}[section]
\newtheorem{cor}{Corollary}[section]
\numberwithin{equation}{section}
\theoremstyle{definition}
\theoremstyle{remark}
\begin{document}
\title{Some completely monotonic functions involving the $q$-gamma function}
\author{Peng Gao}
\address{Division of Mathematical Sciences, School of Physical and Mathematical Sciences,
Nanyang Technological University, 637371 Singapore}
\email{penggao@ntu.edu.sg}
\subjclass[2000]{Primary 33D05} \keywords{Completely monotonic
function, $q$-gamma function}

\begin{abstract}
We present some completely monotonic functions involving the
$q$-gamma function that are inspired by their analogues involving
the gamma function.
\end{abstract}

\maketitle
\section{Introduction}
\label{sec 1} \setcounter{equation}{0}
   The $q$-gamma function is defined for a complex number $z$ and $q \neq 1$ by
\begin{eqnarray}
\label{01}
  \Gamma_q(z)&=& \left\{\begin{array}{ll} \frac {(q;q)_{\infty}}{(q^z;q)_{\infty}}
      (1-q)^{1-z}, & 0<q<1; \\
      \frac
{(q^{-1};q^{-1})_{\infty}}{(q^{-z};q^{-1})_{\infty}}(q-1)^{1-z}q^{\frac
1{2}z(z-1)}, & q>1.
\end{array}\right.
\end{eqnarray}
    where the product $(a;q)_{\infty}$ is defined by
\begin{equation*}
   (a;q)_{\infty}=\prod^{\infty}_{n=0}(1-aq^n).
\end{equation*}

    In what follows we restrict our attention to positive real numbers $x$. We note here \cite{Koo} the limit of $\Gamma_q(x)$ as $q \rightarrow
    1^{-}$ gives back the well-known Euler's gamma function:
\begin{equation*}
    \lim_{q \rightarrow
    1^{-}}\Gamma_q(x)=\Gamma(x)=\int^{\infty}_0 t^x e^{-t}\frac
    {dt}{t}.
\end{equation*}
    It's then easy to see using \eqref{01} that $\lim_{q \rightarrow 1}\Gamma_q(x)=\Gamma(x)$. For
historical remarks on gamma and $q$-gamma functions, we
    refer the reader to \cite{Koo}, \cite{alz1} and \cite{alz1.5}.

    There exists an extensive and rich literature on inequalities for the gamma and $q$-gamma functions of positive real numbers.
For the recent developments in this area, we refer the reader to
the articles \cite{I&M}, \cite{alz1}-\cite{alz2}, \cite{Q&V} and
the references therein. Many of these inequalities follow from the
monotonicity properties of functions which are closely related to
$\Gamma$ (resp. $\Gamma_q$) and its logarithmic derivative $\psi$
(resp. $\psi_q$) as $\psi'$ and $\psi'_q$ are completely
    monotonic functions on $(0, +\infty)$ (see \cite{K&M}, \cite{alz2}). Here we recall that a function $f(x)$ is said to
be completely monotonic on $(a, b)$ if it has derivatives of all
orders and $(-1)^kf^{(k)}(x) \geq 0, x \in (a, b), k \geq 0$. We
further note that Lemma 2.1 of \cite{B&I} asserts that
$f(x)=e^{-h(x)}$ is completely monotonic on an interval if $h'$
is. Following \cite{G&I}, we call such functions $f(x)$
logarithmically completely monotonic.

  We note here that $\lim_{q \rightarrow 1}\psi_q(x)=\psi(x)$ (see
     \cite{K&S}), hence in what follows we also write $\Gamma_1(x)$ for $\Gamma(x)$ and $\psi_1(x)$ for $\psi_q(x)$.
     Thus we may also regard the gamma function as a $q$-gamma function with $q=1$
     and in this manner, many completely monotonic functions involving $\Gamma_q(x)$ and $\psi_q(x)$
      are inspired by their analogues involving $\Gamma(x)$ and $\psi(x)$.
      It is our goal in this paper to present some completely monotonic functions involving $\Gamma_q, \psi_q$ that are motivated
      by this point of view. In the remaining part of this introduction, we briefly mention the motivations for our results in the paper.

     In \cite{Ker}, Kershaw proved the $q=1$ case of the following result for
     $0<s<1, x>0$:
\begin{align}
\label{02}
    e^{(1-s)\psi_q(x+s^{1/2})} < \frac {\Gamma_q(x+1)}{\Gamma_q(x+s)} <
    e^{(1-s)\psi_q(x+(s+1)/2)}.
\end{align}
    A result of Ismail and
   Muldoon \cite{I&M} establishes the second inequality in \eqref{02} for $0<q<1$. In \cite{B&I}, Bustoz and Ismail showed that when $q=1$,
   the function
   ($0<s<1$)
\begin{align*}
   x \mapsto \frac
   {\Gamma_q(x+s)}{\Gamma_q(x+1)}e^{(1-s)\psi_q(x+(s+1)/2)}
\end{align*}
    is completely monotonic on $(0, +\infty)$. In \cite{Gao1}, it
    is shown that the result of Bustoz and Ismail also holds for
    any $q>0$.

   In \cite{alz1.5}, Alzer asked to determine the best possible values of $a(q,s)$ and $b(q,s)$ such that the
     following inequalities hold for all $x>0, 0<q \neq 1, 0<s<1$:
\begin{align}
\label{03}
    e^{(1-s)\psi_q(x+a(q,s))}<\frac
    {\Gamma_q(x+1)}{\Gamma_q(x+s)}<e^{(1-s)\psi_q(x+b(q,s))}.
\end{align}

    We shall determine the best possible values of $a(q,s)$ and $b(q,s)$ in Section \ref{sec 4}.
    Another result given in Section \ref{sec 4} is motivated by the following
    result of Alzer and Batir \cite{AB}, who showed that the
function ($x > 0, c \geq 0$)
\begin{equation*}
   G_{c}(x)=\ln \Gamma(x)-x\ln x+x-\frac 1{2} \ln (2\pi)+\frac 1{2}\psi(x+c)
\end{equation*}
   is completely monotonic if and only if $c \geq 1/3$ and $-G_c(x)$ is completely monotonic if and only if $c = 0$.
   We shall present a $q$-analogue in Section \ref{sec 4} for
$G'_c(x)$.

    Muldoon \cite{M} studied the
monotonicity property of the function
\begin{equation*}
   h_{\alpha}(x)=x^{\alpha}\Gamma(x)(e/x)^x.
\end{equation*}
   He showed that $h_{\alpha}(x)$ is logarithmically completely monotonic on $(0, +\infty)$ for $\alpha \leq 1/2$.
   We point out here that as was shown in \cite[Theorem 3.3]{A&B},
   $1/2$ is the largest possible number to make the assertion hold for $h_{\alpha}(x)$. In \cite[Proposition  4.1]{Gao1},
   it is shown that if one defines for $\alpha \geq 0$,
\begin{equation}
\label{4.1}
  f_{\alpha}(x)=-\ln \Gamma(x)+(x-\frac 1{2})\ln x -x +\frac
  {1}{12}\psi'(x+\alpha),
\end{equation}
  then $f'_{\alpha}(x)$ is completely monotonic on $(0, +\infty)$ if  $\alpha \geq 1/2$ and $-f'_{\alpha}(x)$
  is completely monotonic on $(0, +\infty)$ if $\alpha =0$.
  As was pointed out in \cite{Gao1}, this implies a result of Alzer \cite[Theorem 1]{alz0}.
  In Section \ref{sec 4}, we shall establish a $q$-analogue of the above result.

  It's shown in the proof of Theorem 2.2 in \cite{E&G&P1} that for $x>0$ and $0<q<1$,
\begin{equation}
\label{1.1}
  \psi'_q(x+1) < \frac {\ln (1/q)q^x}{1-q^x}.
\end{equation}
   The $q =1$ analogue of inequality \eqref{1.1} is $\psi'(x+1) \leq 1/x$, which reminds us the following asymptotic expansion \cite[(1.5)]{alz2} for the derivatives of $\psi(x)$:
\begin{equation}
\label{1.2}
  (-1)^{n+1}\psi^{(n)}(x) = \frac
  {(n-1)!}{x^n}+\frac{n!}{2x^{n+1}}+O\left(\frac {1}{x^{n+2}} \right ), \ n \geq 1, \ x
\rightarrow +\infty.
\end{equation}
   We note that Lemma 2.2 of \cite{Gao} asserts that for fixed $n \geq 1, a \geq 0$, the function
$f_{a,n}(x)= x^n(-1)^{n+1}\psi^{(n)}(x+a)$ is increasing on $[0,
+\infty)$
  if and only if $a \geq 1/2$. It follows from this and \eqref{1.2} that we have $\psi'(x+1/2) \leq 1/x$
  and this suggests that inequality \eqref{1.1} would still hold if one replaces $\psi'_q(x+1)$ with
  $\psi'_q(x+1/2)$. We shall show this is indeed the case in
  Section \ref{sec 4}.

\section{Lemmas}
\label{sec 3} \setcounter{equation}{0}
   The following lemma gathers a few results on
   $\Gamma_q$ and $\psi_q$. Equality \eqref{1.12} below is given in \cite[(2.7)]{alz1.5} and the rest can be easily derived
   from \eqref{01} and \eqref{1.12}.
\begin{lemma}
\label{lem1} For $0<q<1$, $x>0$,
\begin{align}
\label{1.12}
  \psi_q(x)  &=   -\ln (1-q) + \ln q \sum^{\infty}_{n=1}\frac {q^{nx}}{1-q^n},  \\
\label{2.2}
   \ln \Gamma_q(x+1) & =\ln \Gamma_q(x)+\ln \frac {1-q^x}{1-q}, \\
\label{2.7}
    \psi_q(x+1)&=\psi_q(x)-\frac {(\ln q)q^x}{1-q^x}, \\
\label{2.7'}
    \psi'_q(x+1) &=\psi'_q(x)-\frac {(\ln
    q)^2q^x}{(1-q^x)^2}.
\end{align}
\end{lemma}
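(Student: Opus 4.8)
The plan is to take \eqref{1.12} as given (it is quoted from \cite[(2.7)]{alz1.5}, so no independent derivation of the series representation is needed) and to obtain the remaining three identities by elementary manipulation. There are two natural tools: the product definition \eqref{01} together with the $q$-shift relation $(a;q)_\infty = (1-a)(aq;q)_\infty$, and the series \eqref{1.12} together with the fact that $\psi_q = (\ln\Gamma_q)'$. I would use the former for \eqref{2.2} and the latter (or differentiation of \eqref{2.2}) for the remaining two.

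First I would establish \eqref{2.2}. From \eqref{01} with $0<q<1$, I would form the quotient
\begin{equation*}
  \frac{\Gamma_q(x+1)}{\Gamma_q(x)} = \frac{(q^x;q)_\infty}{(q^{x+1};q)_\infty}\cdot\frac{(1-q)^{-x}}{(1-q)^{1-x}}.
\end{equation*}
The relation $(q^x;q)_\infty = (1-q^x)(q^{x+1};q)_\infty$ collapses the ratio of infinite products to $1-q^x$, while the powers of $1-q$ combine to $(1-q)^{-1}$; hence $\Gamma_q(x+1)/\Gamma_q(x) = (1-q^x)/(1-q)$, and taking logarithms gives \eqref{2.2}.

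Next, since $\psi_q$ is the logarithmic derivative of $\Gamma_q$, I would differentiate \eqref{2.2} in $x$. Using $\frac{d}{dx}\ln(1-q^x) = -(\ln q)q^x/(1-q^x)$ yields \eqref{2.7} at once. (Alternatively, \eqref{2.7} falls straight out of \eqref{1.12}: the difference $\psi_q(x+1)-\psi_q(x)$ has $n$th summand $\ln q\,(q^{n(x+1)}-q^{nx})/(1-q^n) = -(\ln q)q^{nx}$, and summing the geometric series $\sum_{n\ge 1}q^{nx} = q^x/(1-q^x)$ recovers the same expression.) Differentiating \eqref{2.7} once more, a routine quotient-rule computation gives $\frac{d}{dx}\bigl[q^x/(1-q^x)\bigr] = (\ln q)q^x/(1-q^x)^2$, and multiplying by $-\ln q$ produces \eqref{2.7'}.

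I do not anticipate any genuine obstacle here: once \eqref{1.12} is granted, each of \eqref{2.2}, \eqref{2.7} and \eqref{2.7'} is a one- or two-line computation. The only step demanding a little care is the bookkeeping of the infinite products in the derivation of \eqref{2.2}; everything after that is differentiation and summation of a geometric series.
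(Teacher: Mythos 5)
Your proposal is correct and follows exactly the route the paper indicates: \eqref{1.12} is quoted from \cite[(2.7)]{alz1.5}, and the remaining identities \eqref{2.2}, \eqref{2.7}, \eqref{2.7'} are derived from the product definition \eqref{01} and routine differentiation, which is precisely what the paper means by ``the rest can be easily derived from \eqref{01} and \eqref{1.12}.'' Your computations (the shift relation $(q^x;q)_\infty=(1-q^x)(q^{x+1};q)_\infty$, the derivative of $\ln(1-q^x)$, and the quotient-rule step) all check out.
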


   Our next lemma is a result in \cite{S}:
\begin{lemma}
\label{lem1'} For positive numbers $x \neq y$ and real number $r$,
we define
\begin{equation*}
  E(r,0;x,y)=\left ( \frac {1}{r} \cdot \frac {x^r-y^r}{\ln x-\ln y }\right
  )^{1/r}, \, r\neq 0; \, E(0,0;x,y)=\sqrt{xy}.
\end{equation*}
  Then the function $r \mapsto E(r,0;x,y)$ is strictly increasing on $\mathbb{R}$.
\end{lemma}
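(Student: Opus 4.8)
The plan is to recognize $E(r,0;x,y)$ as a power mean of a fixed positive function and then to invoke the classical monotonicity of power means in the order $r$. Since $E$ is symmetric in $x$ and $y$, I may assume $x>y$ and set $u=\ln x>v=\ln y$. The starting point is the elementary integral representation
\begin{equation*}
  \frac{x^r-y^r}{r(\ln x-\ln y)}=\int_0^1 e^{r\left(v+s(u-v)\right)}\,ds=\int_0^1 h(s)^r\,ds,\qquad h(s):=x^s y^{1-s},
\end{equation*}
valid for $r\neq 0$ by a direct computation. Thus $E(r,0;x,y)=\left(\int_0^1 h(s)^r\,ds\right)^{1/r}$ is exactly the power mean of order $r$ of the function $h$ on $[0,1]$ with Lebesgue measure, and because $x\neq y$ the function $h$ is strictly monotone, hence non-constant.

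Next I would check the limiting value at $r=0$ to confirm that the convention $E(0,0;x,y)=\sqrt{xy}$ is the natural continuation: the geometric mean is $\exp\left(\int_0^1\ln h(s)\,ds\right)=\exp\left(\frac{u+v}{2}\right)=\sqrt{xy}$. This shows that $r\mapsto\ln E(r,0;x,y)=:g(r)$ is continuous at $0$, so it suffices to prove $g'(r)>0$ for $r\neq 0$ and then combine with continuity.

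Writing $\Phi(r)=\int_0^1 h(s)^r\,ds$, so that $g(r)=\frac{1}{r}\ln\Phi(r)$, I would compute
\begin{equation*}
  g'(r)=\frac{N(r)}{r^2},\qquad N(r):=r\,\frac{\Phi'(r)}{\Phi(r)}-\ln\Phi(r),
\end{equation*}
and reduce the claim to $N(r)>0$ for $r\neq 0$. Since $\Phi(0)=1$ gives $N(0)=0$, this follows from showing that $N$ attains a strict minimum at the origin. Differentiating yields $N'(r)=r\cdot\frac{\Phi''(r)\Phi(r)-\Phi'(r)^2}{\Phi(r)^2}$, where the factor $\Phi''\Phi-(\Phi')^2$ equals $\Phi^2$ times the variance of $\ln h$ under the probability measure $h^r\,ds/\Phi(r)$; by the Cauchy--Schwarz inequality it is nonnegative, and strictly positive precisely because $h$ is non-constant. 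Hence $N'(r)$ has the same sign as $r$, so $N$ strictly decreases on $(-\infty,0)$ and strictly increases on $(0,\infty)$, giving $N(r)>0$ for all $r\neq 0$ and therefore $g'(r)>0$.

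I expect the only delicate points to be the behaviour at $r=0$ --- where one must confirm both the continuity of $g$ and that the sign analysis on each side of $0$ assembles into global strict monotonicity --- and the verification that the Cauchy--Schwarz step is strict, which is exactly where the hypothesis $x\neq y$ (equivalently, $h$ non-constant) enters.
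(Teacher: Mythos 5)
Your proof is correct, but there is nothing in the paper to compare it against: the paper states this lemma without proof, quoting it as a known result from Stolarsky \cite{S}. Your argument is a clean, self-contained substitute. The integral representation $\frac{x^r-y^r}{r(\ln x-\ln y)}=\int_0^1 h(s)^r\,ds$ with $h(s)=x^sy^{1-s}$ is a one-line computation and correctly identifies $E(r,0;x,y)$ as the order-$r$ power mean of the non-constant function $h$; from there, writing $g(r)=\frac{1}{r}\ln\Phi(r)$ with $\Phi(r)=\int_0^1 h^r\,ds$, your reduction to $N(r)=r\Phi'(r)/\Phi(r)-\ln\Phi(r)>0$ for $r\neq 0$, via $N(0)=0$ and $N'(r)=r\,\bigl(\Phi''\Phi-(\Phi')^2\bigr)/\Phi^2$, is exactly the standard power-mean monotonicity argument, and the strictness comes out of Cauchy--Schwarz precisely where it should (the variance of $\ln h$ is positive because $h$ is non-constant, which is where $x\neq y$ enters). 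The assembly across $r=0$ using continuity of $g$ is also handled correctly, and differentiation under the integral sign is unproblematic since $h$ is bounded between positive constants on $[0,1]$. What your approach buys is a first-principles proof of the special case $E(r,0;x,y)$ actually used in the paper, making the appeal to the literature dispensable; what the citation buys the author is access to Stolarsky's more general result, namely monotonicity of the full two-parameter family $E(r,s;x,y)$ in both parameters, of which this lemma is the $s=0$ slice.
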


\begin{lemma}
\label{lem2}
   Let $0<q<1$, then for any integer $n \geq 1$,
\begin{eqnarray}
\label{2.5}
&& \frac {\ln q}{1-q^n}+\frac 1{n}-\frac {\ln q}{2}-\frac {n^2(\ln q)^3q^{n/2}}{12(1-q^n)} < 0, \\
\label{2.6} && \frac {\ln q}{1-q^n}+\frac 1{n}-\frac {\ln
q}{2}-\frac {n^2(\ln q)^3}{12(1-q^n)} > 0.
\end{eqnarray}
\end{lemma}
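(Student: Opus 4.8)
The plan is to reduce both inequalities in Lemma~\ref{lem2} to one-variable statements. Writing $q=e^{-t}$ with $t>0$ (so $\ln q=-t$ and $q^{n}=e^{-nt}$) and multiplying each inequality through by $n>0$, every occurrence of $n$ and $t$ collapses into the single quantity $u:=nt$, which ranges over all of $(0,\infty)$. Explicitly, \eqref{2.5} becomes
\[
 -\frac{u}{1-e^{-u}}+1+\frac{u}{2}+\frac{u^{3}e^{-u/2}}{12(1-e^{-u})}<0,
\]
and \eqref{2.6} becomes
\[
 -\frac{u}{1-e^{-u}}+1+\frac{u}{2}+\frac{u^{3}}{12(1-e^{-u})}>0.
\]
Since the integer $n$ enters only through $u$, it suffices to establish these for every real $u>0$; this is the one genuinely clever move, after which everything is calculus.

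Next I would rewrite the common part using hyperbolic functions. From $\tfrac{1+e^{-u}}{1-e^{-u}}=\coth(u/2)$ one gets $-\tfrac{u}{1-e^{-u}}+\tfrac{u}{2}=-\tfrac{u}{2}\coth(u/2)$, while $\tfrac{e^{-u/2}}{1-e^{-u}}=\tfrac{1}{2\sinh(u/2)}$ and $\tfrac{1}{1-e^{-u}}=\tfrac12\bigl(1+\coth(u/2)\bigr)$. Setting $v=u/2>0$, inequality \eqref{2.5} turns into $1-v\coth v+\tfrac{v^{3}}{3\sinh v}<0$ and \eqref{2.6} into $1-v\coth v+\tfrac{v^{3}}{3}\bigl(1+\coth v\bigr)>0$.

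For \eqref{2.5}, I would multiply by $3\sinh v>0$ and study $H(v)=3\sinh v-3v\cosh v+v^{3}$. Here $H(0)=0$ and $H'(v)=3v(v-\sinh v)$, which is negative for $v>0$ because $\sinh v>v$; hence $H(v)<0$ on $(0,\infty)$, giving the claim. For \eqref{2.6}, multiply by $3\sinh v$ and use $(1+\coth v)\sinh v=\sinh v+\cosh v=e^{v}$ to reduce to $\Psi(v)=3\sinh v-3v\cosh v+v^{3}e^{v}>0$. Then $\Psi(0)=0$ and a short computation gives $\Psi'(v)=v e^{v}\chi(v)$ with $\chi(v)=v^{2}+3v-\tfrac32+\tfrac32 e^{-2v}$; since $\chi(0)=\chi'(0)=0$ and $\chi''(v)=2+6e^{-2v}>0$, we get $\chi(v)>0$ and hence $\Psi'(v)>0$ for $v>0$, so $\Psi(v)>0$.

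I expect the only real obstacle to be bookkeeping: getting the signs right in the substitution (recall $(\ln q)^{3}=-t^{3}<0$, which flips the sign of the last term) and choosing the normalization $v=u/2$ that makes both $H$ and $\Psi$ telescope cleanly under a single differentiation. Once those are set up correctly, the monotonicity arguments are entirely routine.
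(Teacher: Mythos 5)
Your proof is correct; every step checks out (the substitution, the hyperbolic identities, and the derivative computations $H'(v)=3v(v-\sinh v)$ and $\Psi'(v)=ve^{v}\chi(v)$ with $\chi''(v)=2+6e^{-2v}>0$ are all accurate). The opening reduction is exactly the paper's: the paper sets $x=\ln q^{n}$ and reduces \eqref{2.5}, \eqref{2.6} to the one-variable claims $f(x)<0$ and $g(x)>0$ for $x<0$, where $f(x)=6x(1+e^{x})+12(1-e^{x})-x^{3}e^{x/2}$ and $g(x)=6x(1+e^{x})+12(1-e^{x})-x^{3}$; this is the same collapse as your $u=nt$. Where you genuinely diverge is in the normalization of the auxiliary functions: the paper clears denominators by $12(1-e^{x})$, while you first pass to hyperbolic functions and multiply by $3\sinh v$, which strips off an extraneous exponential factor (one can check $f(-2v)=8e^{-v}H(v)$ and $g(-2v)=8e^{-v}\Psi(v)$, so the two sets of functions agree up to the positive factor $8e^{-v}$). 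For \eqref{2.5} this buys a real simplification: your $H$ dies after a single differentiation via $\sinh v>v$, whereas the paper must compute $f''(x)=6xe^{x/2}\bigl(e^{x/2}-1-x/2-x^{2}/24\bigr)$, assert the existence of a unique root $x_{0}<0$ of the parenthesized factor, and track the resulting sign changes of $f'$ — a step the paper only sketches. For \eqref{2.6} the trade goes the other way: the paper's $g''(x)=6x(e^{x}-1)>0$ is immediate, while your multiplication by $3\sinh v$ reintroduces $e^{v}$ into $\Psi$ and costs you the extra layer $\chi$, $\chi'$, $\chi''$. Both arguments are fully rigorous; yours is arguably the cleaner treatment of the more delicate inequality \eqref{2.5}, and it removes the one spot where the paper's proof asks the reader to take a root-counting claim on faith.
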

\begin{proof}
  On setting $\ln q^n =x$, it is easy to see that inequality \eqref{2.5} follows from $f(x) < 0$ for $x<0$, where
\begin{equation*}
  f(x)=6x(1+e^x)+12(1-e^x)-x^3e^{x/2}.
\end{equation*}
  As $f''(x)=6xe^{x/2}(e^{x/2}-1-x/2-x^2/24)$ and it is easy to see that there is a unique solution $x_0 \in (-\infty, 0)$
  of the equation $e^{x/2}-1-x/2-x^2/24=0$, it follows that
  $f''(x)>0$ for $x<x_0$ and $f''(x)<0$ for $x_0<x<0$. One then
  deduces easily via the expression of $f'(x)$ and the observation $f'(0)=0$ that $f'(x)>0$ for $x<0$. It follows from this and $f(0)=0$
  that $f(x)<0$ for $x<0$.

  Similarly, inequality \eqref{2.6} follows from $g(x) > 0$ for $x<0$, where
\begin{equation*}
  g(x)=6x(1+e^x)+12(1-e^x)-x^3.
\end{equation*}
  As $g''(x)=6x(e^x-1)>0$ for $x<0$ and $g'(0)=0$, we see that $g'(x)<0$ for $x<0$ and it follows from this and $g(0)=0$ that $g(x)>0$
  for $x<0$ and this completes the proof.
\end{proof}
\section{Main Results}
\label{sec 4} \setcounter{equation}{0}

    We first determine the best possible value for $a(q,s)$ in
    \eqref{03}. For this, for any $q>0, t>s>0$, we denote
    $I_{\psi_q}(s,t)$ as
    the integral $\psi_q$ mean of $s$ and $t$:
\begin{align}
\label{3.01}
   I_{\psi_q}(s,t)=\psi^{-1}_q\left ( \frac {1}{t-s}\int^{t}_{s}\psi_q(u)du
\right ).
\end{align}
    Then we have the following result:
\begin{theorem}
\label{thm4'} For every $q>0, x > 0, t>s>0$, we have
\begin{align*}
   \psi_q \left(x+I_{\psi_q}(s,t) \right) < \frac
   {1}{t-s}\int^{t}_{s}\psi_q(x+u)du,
\end{align*}
   where the constant $I_{\psi_q}(s,t)$ is best possible.
\end{theorem}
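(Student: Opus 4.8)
The plan is to establish the inequality via Jensen's inequality applied to the convex function $\psi_q$, and then to verify that the shift constant $I_{\psi_q}(s,t)$ cannot be improved by an asymptotic analysis as $x \to +\infty$. First I would recall that $\psi'_q$ is completely monotonic on $(0,+\infty)$, so in particular $\psi'_q > 0$ and $\psi''_q < 0$; this makes $\psi_q$ strictly increasing and strictly concave, which is the structural fact driving the argument. Because $\psi_q$ is strictly concave, so is the shifted function $u \mapsto \psi_q(x+u)$ for each fixed $x > 0$, and Jensen's inequality for the average over $[s,t]$ gives
\begin{align*}
   \frac{1}{t-s}\int_s^t \psi_q(x+u)\,du < \psi_q\left(x + \frac{1}{t-s}\int_s^t u\,du\right) = \psi_q\left(x + \frac{s+t}{2}\right).
\end{align*}
This, however, is the wrong direction for what we want, so instead I would work directly with the definition of $I_{\psi_q}$.

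The cleaner route is to set $m(x) = \frac{1}{t-s}\int_s^t \psi_q(x+u)\,du$ and apply $\psi_q^{-1}$ after comparing. By concavity of $\psi_q$ we have, for each $u$, that Jensen gives $m(x) < \psi_q\bigl(x + (s+t)/2\bigr)$, but the sharp statement we need is that $m(x) > \psi_q\bigl(x + I_{\psi_q}(s,t)\bigr)$. To see the correct comparison, I would instead exploit that $I_{\psi_q}(s,t) = \psi_q^{-1}\bigl(\frac{1}{t-s}\int_s^t \psi_q(u)\,du\bigr)$ is defined by the $x=0$ value of the averaging, and then show the gap $m(x) - \psi_q(x+I_{\psi_q}(s,t))$ has the right sign for all $x>0$. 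Define
\begin{align*}
   \Phi(x) = \frac{1}{t-s}\int_s^t \psi_q(x+u)\,du - \psi_q\bigl(x + I_{\psi_q}(s,t)\bigr),
\end{align*}
and observe $\Phi(0) = 0$ by the very definition of $I_{\psi_q}(s,t)$. The key then is to analyze the sign of $\Phi'(x)$ using the concavity of $\psi_q$ together with a monotonicity property of the integral mean relative to a point evaluation; I expect that $\Phi'(x) > 0$ for $x > 0$, which combined with $\Phi(0)=0$ yields $\Phi(x) > 0$ as desired.

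For the sharpness (best possible constant), I would argue by contradiction: suppose some $c$ with $c < I_{\psi_q}(s,t)$ also works, i.e. $\psi_q(x+c) \le \frac{1}{t-s}\int_s^t \psi_q(x+u)\,du$ for all $x>0$, and evaluate the limit as $x \to 0^+$ (or examine $x$ near $0$) to contradict the defining equality at $x=0$. Alternatively, and more robustly, I would use the asymptotic expansion in \eqref{1.2} for $q=1$ and its $q$-analogue derivable from \eqref{1.12}: expanding both $\psi_q(x+c)$ and the averaged integral for large $x$ and matching leading terms forces the shift to equal precisely $I_{\psi_q}(s,t)$, so no smaller (nor larger) constant can serve in the strict inequality. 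The main obstacle I anticipate is pinning down the sign of $\Phi'(x)$ rigorously, since it requires more than plain concavity; I would handle this by writing $\Phi'(x) = \frac{1}{t-s}\int_s^t \psi'_q(x+u)\,du - \psi'_q(x+I_{\psi_q}(s,t))$ and invoking that $\psi'_q$ is itself completely monotonic (hence convex), so that a second application of Jensen together with the fact that $I_{\psi_q}(s,t)$ lies in $(s,t)$ controls the comparison. The delicate point is ensuring the strictness and handling the full range $q>0$ uniformly, including $q=1$ and $q>1$ via the conversions in \eqref{01} and Lemma \ref{lem1}.
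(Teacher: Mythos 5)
Your central step fails. The plan hinges on showing $\Phi'(x)>0$ for all $x>0$, where $\Phi(x)=\frac{1}{t-s}\int_s^t\psi_q(x+u)\,du-\psi_q\bigl(x+I_{\psi_q}(s,t)\bigr)$, but this is false at least for $0<q\le 1$. Indeed, for $0<q<1$, \eqref{1.12} shows $\psi_q(x)\to-\ln(1-q)$ as $x\to+\infty$, so both terms of $\Phi$ tend to the same limit and $\Phi(x)\to 0$; for $q=1$, the expansion $\psi(x+u)=\ln x+u/x+O(x^{-2})$ gives $\Phi(x)=\bigl((s+t)/2-I_{\psi}(s,t)\bigr)/x+O(x^{-2})\to 0$. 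Since $\Phi(0)=0$, $\Phi>0$ on $(0,+\infty)$ (that is the theorem), and $\Phi\to 0$ at $+\infty$, the function $\Phi$ must decrease somewhere, so no argument can establish $\Phi'>0$ everywhere. Your proposed mechanism is also inconclusive on its own terms: Jensen for the convex $\psi'_q$ gives $\frac{1}{t-s}\int_s^t\psi'_q(x+u)\,du\ge\psi'_q\bigl(x+(s+t)/2\bigr)$, but $I_{\psi_q}(s,t)\le(s+t)/2$ (Jensen for the concave $\psi_q$) and $\psi'_q$ is decreasing, so the subtracted term $\psi'_q\bigl(x+I_{\psi_q}(s,t)\bigr)$ is likewise $\ge\psi'_q\bigl(x+(s+t)/2\bigr)$; both quantities dominate the same benchmark, which yields no comparison between them. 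The paper avoids this trap by working in the argument scale rather than the value scale: applying the increasing function $\psi^{-1}_q$, the claimed inequality is equivalent to $I_{\psi_q}(x+s,x+t)-x>I_{\psi_q}(s,t)$, and the function $x\mapsto I_{\psi_q}(x+s,x+t)-x$ is increasing by Theorem 4 of \cite{E&P}, a result that uses the full complete monotonicity of $\psi'_q$, not mere concavity of $\psi_q$. The essential point you missed is that the gap measured through $\psi^{-1}_q$ is monotone in $x$ even though the value gap $\Phi$ is not.

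Your sharpness argument is also oriented the wrong way. Any $c<I_{\psi_q}(s,t)$ trivially satisfies $\psi_q(x+c)<\frac{1}{t-s}\int_s^t\psi_q(x+u)\,du$, since $\psi_q$ is increasing; assuming such a $c$ ``works'' produces no contradiction. Best possible means that every $c>I_{\psi_q}(s,t)$ fails, and this is seen exactly as the paper does, by letting $x\to 0^+$: then $\psi_q(x+c)\to\psi_q(c)>\psi_q\bigl(I_{\psi_q}(s,t)\bigr)=\frac{1}{t-s}\int_s^t\psi_q(u)\,du$, so the inequality reverses for all small $x>0$. Your ``more robust'' alternative of matching asymptotics as $x\to+\infty$ would identify the wrong constant: for large $x$ the comparison only constrains $c$ up to $(s+t)/2$, which is the regime governing the upper-bound constant $b(q,s)$ in \eqref{03}, whereas the binding regime for the present lower-bound constant is $x\to 0^+$.
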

\begin{proof}
   We note that the case $q=1$ of the assertion of the theorem is
   already established in \cite[Thereom 4]{E&G&P}. The general
   case can be established similarly, on noting that the function
\begin{align*}
   x \mapsto I_{\psi_q}(x+s,x+t)-x
\end{align*}
    is increasing by Theorem 4 of \cite{E&P}, in view that
    $\psi'_q$ is completely monotonic on $(0, +\infty)$. On
    considering the case $x \rightarrow 0^{+}$, we see immediately
    that the constant $I_{\psi_q}(s,t)$ is best possible and this
    completes the proof.
\end{proof}

    On setting $t=1$ in Theorem \ref{thm4'}, we readily deduce the
    following result concerning the best possible value $a(q,s)$
    in \eqref{03}:
\begin{cor}
\label{cor2.0} Let $q > 0$ and $0<s<1$. The first inequality of
\eqref{03} holds for all $x>0$ with the best possible value
$a(q,s)=I_{\psi_q}(s,1)$, where $I_{\psi_q}$ is defined as in
\eqref{3.01}.
\end{cor}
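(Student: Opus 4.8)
The plan is to reduce the corollary to the $t=1$ case of Theorem \ref{thm4'} by rewriting the middle term of \eqref{03} as an integral of $\psi_q$. First I would take logarithms in the first inequality of \eqref{03}, so that the claim becomes
\[
  (1-s)\psi_q\bigl(x+a(q,s)\bigr) < \ln\frac{\Gamma_q(x+1)}{\Gamma_q(x+s)}.
\]
Since $\psi_q$ is the logarithmic derivative of $\Gamma_q$, the fundamental theorem of calculus gives
\[
  \ln\frac{\Gamma_q(x+1)}{\Gamma_q(x+s)} = \int^{1}_{s}\frac{d}{du}\ln\Gamma_q(x+u)\,du = \int^{1}_{s}\psi_q(x+u)\,du,
\]
and, after dividing by $1-s>0$, the inequality is seen to be exactly
\[
  \psi_q\bigl(x+a(q,s)\bigr) < \frac{1}{1-s}\int^{1}_{s}\psi_q(x+u)\,du.
\]

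Next I would invoke Theorem \ref{thm4'} with $t=1$, which asserts precisely that this inequality holds for all $x>0$ when $a(q,s)=I_{\psi_q}(s,1)$, and moreover that the constant $I_{\psi_q}(s,1)$ is best possible there. Substituting back establishes the first inequality of \eqref{03} with $a(q,s)=I_{\psi_q}(s,1)$, so the only remaining task is to transfer the best-possibility statement from Theorem \ref{thm4'} to \eqref{03}.

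The key point for this last step, which I expect to be the only place requiring a moment's care, is that $\psi_q$ is strictly increasing on $(0,+\infty)$, since $\psi'_q$ is completely monotonic and positive. Hence $a \mapsto \psi_q(x+a)$ is strictly increasing, so enlarging $a(q,s)$ beyond $I_{\psi_q}(s,1)$ would strengthen the left-hand side and thereby violate the inequality for some $x>0$; conversely any smaller value already works, so no larger constant can be admitted. This is exactly the best-possibility already built into Theorem \ref{thm4'} via the limit $x\to 0^{+}$, and it completes the proof.
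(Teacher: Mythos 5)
Your proposal is correct and follows exactly the paper's route: the paper proves the corollary simply by setting $t=1$ in Theorem \ref{thm4'}, and your write-up just makes explicit the routine reduction (taking logarithms, writing $\ln\Gamma_q(x+1)-\ln\Gamma_q(x+s)=\int_s^1\psi_q(x+u)\,du$, and transferring best-possibility via the strict monotonicity of $\psi_q$ and of the exponential) that the paper leaves to the reader.
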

     Now to determine the best possible value for $b(q,s)$ in
     \eqref{03}, we note that it is easy to see on considering the case $x \rightarrow +\infty$
that the best possible value for $b(q,s)$ is $(1+s)/2$ when $q>1$.
     When $0<q<1$, we have the following result:
\begin{theorem}
\label{thm4} Let $0<q<1$ and $0<s<1$. Let
\begin{align*}
    b(q,s)=\frac {\ln \frac {q^s-q}{(s-1)\ln q}}{\ln q}.
\end{align*}
    For $x>0$, let
\begin{align*}
    f_{q,s,c}(x)=\ln \Gamma_q(x+1)-\ln
    \Gamma_q(x+s)-(1-s)\psi_q(x+c),
\end{align*}
    where $c>0$. Then $-f_{q,s,c}(x)$ is completely monotonic on $(0, +\infty)$ if and only if $c \geq b(q,s)$.
\end{theorem}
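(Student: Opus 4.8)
The plan is to differentiate $f_{q,s,c}$ and use the series \eqref{1.12} to reduce complete monotonicity to a sign condition on a family of coefficients. Writing $L=\ln q<0$, \eqref{1.12} gives $\psi_q^{(j)}(x)=L\sum_{n=1}^\infty (nL)^j q^{nx}/(1-q^n)$ for $j\ge 1$, so differentiating $f_{q,s,c}$ term by term (justified by the geometric decay of $q^{nx}$) yields, for every $k\ge 1$,
\[
f_{q,s,c}^{(k)}(x)=L\sum_{n=1}^\infty \frac{(nL)^{k-1}q^{nx}}{1-q^n}\,A_n,\qquad A_n=q^n-q^{ns}-(1-s)(nL)q^{nc}.
\]
Since $(-1)^{k}(-f_{q,s,c})^{(k)}=(-1)^{k+1}f_{q,s,c}^{(k)}=-|L|^{k}\sum_n n^{k-1}q^{nx}A_n/(1-q^n)$ and each factor $n^{k-1}q^{nx}/(1-q^n)$ is positive, the condition $A_n\le 0$ for all $n$ at once forces $(-1)^{k}(-f_{q,s,c})^{(k)}(x)\ge 0$ for all $k\ge 1$ and $x>0$.

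To cover the case $k=0$ I would check that $\lim_{x\to\infty}f_{q,s,c}(x)=0$: since $\psi_q(x+u)\to-\ln(1-q)$ as $x\to\infty$, both $\ln\Gamma_q(x+1)-\ln\Gamma_q(x+s)=\int_s^1\psi_q(x+u)\,du$ and $(1-s)\psi_q(x+c)$ tend to $-(1-s)\ln(1-q)$. As $A_n\le 0$ makes $f'_{q,s,c}\ge 0$, so that $f_{q,s,c}$ increases to $0$, it follows that $-f_{q,s,c}\ge 0$, which supplies the remaining inequality. Thus the theorem reduces entirely to identifying exactly when $A_n\le 0$ holds for all $n$.

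The heart of the matter is the threshold analysis of $A_n$. As $c\mapsto A_n$ is strictly decreasing, $A_n\le 0$ is equivalent to $c\ge c_n$, where $c_n$ solves $A_n=0$, that is $q^{nc_n}=(q^n-q^{ns})/\bigl((1-s)nL\bigr)$. The key observation I would exploit is that, after taking $n$-th roots,
\[
q^{c_n}=\left(\frac{1}{n}\cdot\frac{q^n-(q^s)^n}{\ln q-\ln q^s}\right)^{1/n}=E(n,0;q,q^s),
\]
the exponential mean of Lemma \ref{lem1'} evaluated at $r=n$. By that lemma $r\mapsto E(r,0;q,q^s)$ is increasing, so $q^{c_n}$ increases with $n$; since $\ln q<0$, this makes $c_n$ decreasing in $n$, whence $\max_n c_n=c_1=b(q,s)$ (the case $n=1$ recovering the stated formula for $b(q,s)$). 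Consequently $A_n\le 0$ for all $n$ holds precisely when $c\ge b(q,s)$, which together with the previous paragraph establishes the sufficiency direction.

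For necessity I would argue asymptotically: if $-f_{q,s,c}$ is completely monotonic then, taking $k=1$, $\sum_n q^{nx}A_n/(1-q^n)\le 0$ for all $x>0$; letting $x\to\infty$ the $n=1$ term $q^xA_1/(1-q)$ dominates the $O(q^{2x})$ remainder, forcing $A_1\le 0$, i.e. $c\ge c_1=b(q,s)$. The main obstacle I anticipate is the clean identification $q^{c_n}=E(n,0;q,q^s)$ and the verification that the monotonicity of $E$ in its first parameter transfers, under $\ln q<0$, to the correct ordering of the $c_n$; once Lemma \ref{lem1'} is in hand this is short, but spotting the right mean and tracking the orientation of the inequalities is the delicate point. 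The term-by-term differentiation and the interchange of summation and differentiation are routine given the rapid decay of $q^{nx}$.
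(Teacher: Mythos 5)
Your proof is correct, and the sufficiency half is essentially the paper's own argument: the same reduction via \eqref{1.12} to the sign condition $A_n\le 0$, the same appeal to Lemma \ref{lem1'}, and the same use of $\lim_{x\to\infty}f_{q,s,c}(x)=0$ to recover the $k=0$ case. Your identification $q^{c_n}=E(n,0;q,q^s)$ is only a cosmetic variant of the paper's inequality $E^{s-1}(n(s-1),0;q,1)\ge q^{b(q,s)-1}$ (the paper's quantity equals your $q^{c_n-1}$), though yours is arguably cleaner since it avoids raising $E$ to the negative power $s-1$ and the attendant reversal of inequalities. Where you genuinely diverge is the necessity half. The paper does not use the derivative series there: it invokes the functional equations \eqref{2.2} and \eqref{2.7} to compute $f_{q,s,c}(x+1)-f_{q,s,c}(x)$ in closed form, expands in $z=q^x$, and notes that the first-order coefficient $q^s-q+(1-s)(\ln q)q^c$ is negative when $c<b(q,s)$, so $f_{q,s,c}$ eventually decreases along integer steps, contradicting the monotonicity forced by complete monotonicity of $-f_{q,s,c}$. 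You instead extract $A_1\le 0$ directly from the $k=1$ condition by letting $x\to\infty$ in the series for $f'_{q,s,c}$, where the $n=1$ term dominates. Both arguments are sound; yours is more uniform with the sufficiency half (one series representation carries the entire proof), while the paper's difference-equation route pays off later: in Corollary \ref{cor2.1} the same computation is iterated to $f_{q,s,c}(x+k)-f_{q,s,c}(x)$ and letting $k\to\infty$ upgrades the conclusion to failure of the inequality \eqref{2.8} itself for every $c<b(q,s)$, a strictly stronger statement than failure of complete monotonicity, which your derivative asymptotics alone would not immediately deliver.
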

\begin{proof}
   We have, using \eqref{1.12}, that
\begin{align*}
  f'_{q,s, b(q,s)}(x) & =\psi_q(x+1)-\psi_q(x+s)-(1-s)\psi'_q(x+b(q,s))  \\
  &=\ln
  q\sum^{\infty}_{n=1}\frac {q^{nx}}{1-q^n}\left (q^n-q^{ns}-(1-s)(\ln q^n)q^{nb(q,s)} \right
  ).
\end{align*}
   We want to show $q^n-q^{ns}-(1-s)(\ln q^n)q^{nb(q,s)} \leq 0$,
   which is equivalent to $E^{s-1}(n(s-1), 0; q, 1) \geq
   q^{b(q,s)-1}$, where $E$ is defined as in Lemma \ref{lem1'}.
   It also follows from Lemma \ref{lem1'} that $E^{s-1}(n(s-1), 0; q, 1)
   \geq E^{s-1}(s-1, 0; q, 1)=q^{b(q,s)-1}$. We then deduce that
   $f'_{q,s,c}(x)$ is completely monotonic on $(0, +\infty)$ when $c \geq b(q,s)$. This
   together with the observation that $\lim_{x \rightarrow
   +\infty}f_{q,s,c}(x)=0$ implies the ``if" part of the assertion of the theorem.

   To show the ``only if" part of the assertion of the theorem, we
   use \eqref{2.2} and \eqref{2.7} to deduce that
\begin{align*}
   f_{q,s,c}(x+1)-f_{q,s,c}(x)=\ln \frac
   {1-q^{x+1}}{1-q^{x+s}}+(1-s)\ln q\frac {q^{x+c}}{1-q^{x+c}}.
\end{align*}
   If we set $z=q^x$ and consider the Taylor expansion of the
    above expression at $z=0$, then the first order term is:
 \begin{align*}
    \left(q^s-q+\left(1-s \right )\left(\ln q \right )q^c \right )z.
 \end{align*}
    Note that the expression in the parenthesis above is $<0$ if
    $c<b(q,s)$ as it is $0$ when $c=b(q,s)$. This implies that $f_{q,s,c}(x+1)<f_{q,s,c}(x)$ when $x$
    is large enough and this shows that $-f_{q,s,c}(x)$ can't be
    completely monotonic on $(0, +\infty)$ when $c<b(q,s)$ and this
    completes the proof of the ``only if" part of the assertion of
    the theorem.
\end{proof}

   Theorem \ref{thm4} now allows us to determine the best possible value of
   $b(q,s)$ in \eqref{03} when $0<q<1$ in the following:
\begin{cor}
\label{cor2.1} Let $0<q<1$ and $0<s<1$. The inequality
\begin{align}
\label{2.8}
   \frac
    {\Gamma_q(x+1)}{\Gamma_q(x+s)}< e^{(1-s)\psi_q(x+b(q,s))}
\end{align}
   holds for all $x>0$ with the best possible value $b(q,s)$ given as in the statement of Theorem
   \ref{thm4}.
\end{cor}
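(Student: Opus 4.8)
The plan is to reduce the inequality \eqref{2.8} to a statement about the function $f_{q,s,c}$ of Theorem \ref{thm4} and then read off both the inequality and its optimality directly from that theorem. Taking logarithms, the inequality
\begin{align*}
   \frac{\Gamma_q(x+1)}{\Gamma_q(x+s)} < e^{(1-s)\psi_q(x+c)}
\end{align*}
with a general parameter $c$ in place of $b(q,s)$ is equivalent to
\begin{align*}
   \ln \Gamma_q(x+1)-\ln \Gamma_q(x+s)-(1-s)\psi_q(x+c) = f_{q,s,c}(x) < 0,
\end{align*}
so it suffices to show that $f_{q,s,b(q,s)}(x)<0$ for all $x>0$ and that this fails for every $c<b(q,s)$.

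For the inequality itself I would set $c=b(q,s)$ and apply the ``if'' part of Theorem \ref{thm4}: $-f_{q,s,b(q,s)}$ is completely monotonic on $(0,+\infty)$, so in particular $-f_{q,s,b(q,s)}(x)\geq 0$, that is $f_{q,s,b(q,s)}(x)\leq 0$. To upgrade this to the strict inequality required by \eqref{2.8}, I would revisit the computation in the proof of Theorem \ref{thm4}, where $f'_{q,s,b(q,s)}(x)$ is written as $\ln q\sum_{n\geq 1}\frac{q^{nx}}{1-q^n}(q^n-q^{ns}-(1-s)(\ln q^n)q^{nb(q,s)})$; the bracketed factor vanishes for $n=1$ but is strictly negative for $n\geq 2$ by the strict monotonicity asserted in Lemma \ref{lem1'}. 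Hence $f'_{q,s,b(q,s)}(x)>0$ for every $x>0$, so $f_{q,s,b(q,s)}$ is strictly increasing, and together with $\lim_{x\to+\infty}f_{q,s,b(q,s)}(x)=0$ this forces $f_{q,s,b(q,s)}(x)<0$ on $(0,+\infty)$, which is exactly \eqref{2.8}.

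For optimality I would invoke the ``only if'' part of Theorem \ref{thm4} and its proof to rule out any smaller constant. For $c<b(q,s)$ the first-order Taylor coefficient $q^s-q+(1-s)(\ln q)q^c$ of $f_{q,s,c}(x+1)-f_{q,s,c}(x)$ at $q^x=0$ is strictly negative, so $f_{q,s,c}(x+1)<f_{q,s,c}(x)$ once $x$ is large. Iterating along integer translates yields a strictly decreasing sequence tending to $\lim_{x\to+\infty}f_{q,s,c}(x)=0$, whose terms are therefore positive; thus $f_{q,s,c}(x)>0$ for some $x>0$ and \eqref{2.8} is violated. Since the bound $e^{(1-s)\psi_q(x+c)}$ is tightest (smallest) for the smallest admissible $c$, this shows $b(q,s)$ is best possible.

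Since essentially all of the analytic content already resides in Theorem \ref{thm4}, there is no genuine obstacle here. The only point demanding care is the passage from the non-strict inequality $f_{q,s,b(q,s)}(x)\leq 0$ furnished by complete monotonicity to the strict inequality in \eqref{2.8}; this is precisely what the strict positivity of $f'_{q,s,b(q,s)}$, coming from the strict case $n\geq 2$ of Lemma \ref{lem1'}, supplies.
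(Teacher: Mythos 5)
Your proposal is correct and follows essentially the same route as the paper: both deduce the strict inequality from the strict positivity of $f'_{q,s,b(q,s)}$ (established in the proof of Theorem \ref{thm4}) together with $\lim_{x\to+\infty}f_{q,s,b(q,s)}(x)=0$, and both prove optimality by iterating $f_{q,s,c}(x+1)<f_{q,s,c}(x)$ along integer translates and letting the translate tend to infinity. Your only addition is to spell out why $f'_{q,s,b(q,s)}(x)>0$ is strict (the $n=1$ term vanishes while the terms $n\geq 2$ are strictly negative by the strict monotonicity in Lemma \ref{lem1'}), a detail the paper asserts without elaboration.
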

\begin{proof}
   Using the same notions in the proof of Theorem \ref{thm4}, we see from the proof of Theorem \ref{thm4} that
   $f'_{q,s,b(q,s)}(x)>0$ for $x>0$, which implies the strict inequality in
   \eqref{2.8}. To show $b(q,s)$ is best possible, we note that
   in the proof of Theorem \ref{thm4}, we've shown that
   $f_{q,s,c}(x+1)-f_{q,s,c}(x)<0$ for $x$ large enough if
   $c<b(q,s)$. It follows that $f_{q,s,c}(x+k)-f_{q,s,c}(x)<0$ for
   any positive integer $k$ when $x$ is large enough and
   $c<b(q,s)$. On letting $k \rightarrow +\infty$, we see
   immediately that this implies that $-f_{q,s,c}(x)<0$, so that
   inequality \eqref{2.8} fails to hold with $b(q,s)$ being
   replaced by any $c<b(q,s)$ and this completes the proof.
\end{proof}

   We note here that Corollary \ref{cor2.1} refines a
   result of Ismail and Muldoon in \cite{I&M}, mentioned in the introduction of this paper, where $b(q,s)$ is replaced by
$(1+s)/2$ in \eqref{2.8}. One can also check directly that $b(q,s)
\leq (1+s)/2$, as it follows from $E(s-1, 0; q, 1) \leq E(0,0;
q,1)$.
   Moreover, it is easy to see that when $q
   \rightarrow 1^{-}$, $b(q,s) \rightarrow (1+s)/2$ and in this
   case \eqref{2.8} gives back the second inequality in \eqref{02} for $q=1$.


   Our next result is a $q$-analogue of the result of Alzer and and Batir \cite{AB} mentioned in Section \ref{sec 1}.
\begin{theorem}
 \label{thm3} Let $0<q<1$ be fixed. Let $c \geq 0$. Let $a_q=(q-1-\ln q)/(\ln q)^2$. The function
 \begin{align*}
    g_{q,c}(x)=\psi_q(x)-\ln\frac {1-q^x}{1-q}+a_q\psi'_q(x+c)
 \end{align*}
    is completely monotonic on $(0, +\infty)$ if and only if $c =0 $.
\end{theorem}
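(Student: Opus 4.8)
The plan is to expand $g_{q,c}$ into a Dirichlet-type series in the variable $q^x$ and to read off complete monotonicity from the signs of the coefficients. Using \eqref{1.12} together with the expansion $\ln(1-q^x)=-\sum_{n=1}^\infty q^{nx}/n$ and the identity $a_q(\ln q)^2=q-1-\ln q$, I would first verify that the additive constants cancel and that
\begin{align*}
   g_{q,c}(x)=\sum_{n=1}^\infty c_n(c)\,q^{nx},\qquad c_n(c)=\frac{\ln q}{1-q^n}+\frac 1n+(q-1-\ln q)\frac{nq^{nc}}{1-q^n}.
\end{align*}
Since $0<q<1$, each $q^{nx}=e^{-n(-\ln q)x}$ is completely monotonic, and a convergent series of completely monotonic functions with nonnegative coefficients is again completely monotonic; the series and its termwise derivatives converge uniformly on $[\delta,\infty)$ for each $\delta>0$, so this reduction is legitimate. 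It therefore suffices to analyze the sign of $c_n(c)$.

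For the ``if'' part I would set $c=0$ and show $c_n(0)\ge 0$ for every $n\ge 1$. Clearing the positive factor $n(1-q^n)$, this is equivalent to $Q(n)\ge 0$, where
\begin{align*}
   Q(t)=t\ln q+1-q^t+(q-1-\ln q)t^2 .
\end{align*}
One computes $Q(1)=0$, and the main work is to establish $Q(t)\ge 0$ for all real $t\ge 1$. I would do this by a convexity argument: since $Q''(t)=-(\ln q)^2q^t+2(q-1-\ln q)$ is increasing in $t$, it suffices to verify $Q''(1)\ge 0$ to conclude that $Q$ is convex on $[1,\infty)$; together with $Q(1)=0$ and $Q'(1)>0$ this forces $Q$ to be increasing and hence nonnegative there. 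After the substitution $u=-\ln q>0$, both auxiliary facts $Q'(1)>0$ and $Q''(1)\ge 0$ reduce to one-variable inequalities of the same elementary type as those in the proof of Lemma \ref{lem2} (namely $u+(u+2)e^{-u}-2>0$ and $2(e^{-u}-1+u)-u^2e^{-u}\ge 0$), each provable by evaluating the function and its first two or three derivatives at $u=0$. The positivity of $Q$ is the step I expect to be the main obstacle.

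For the ``only if'' part I would isolate the leading coefficient
\begin{align*}
   c_1(c)=\frac{\ln q}{1-q}+1+(q-1-\ln q)\frac{q^c}{1-q}.
\end{align*}
A direct computation gives $c_1(0)=0$, and since $q-1-\ln q>0$ while $\ln q<0$, the derivative $\tfrac{d}{dc}c_1(c)=(q-1-\ln q)(\ln q)q^c/(1-q)<0$, so $c_1(c)<c_1(0)=0$ for every $c>0$. As $x\to+\infty$ the factor $q^x$ dominates the series, so $g_{q,c}(x)=q^x\bigl(c_1(c)+o(1)\bigr)<0$ for all large $x$. This violates the necessary condition $g_{q,c}\ge 0$ for complete monotonicity, so $g_{q,c}$ fails to be completely monotonic whenever $c>0$. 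Combining the two parts yields the claimed equivalence.
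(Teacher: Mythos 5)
Your proposal is correct, and its core coincides with the paper's own argument: your expansion of $g_{q,c}$ with coefficients $c_n(c)$ is exactly the paper's series, and your function $Q$ is the paper's auxiliary function $h_q$ in disguise. Indeed, substituting $s=-t\ln q$ gives $Q(t)=-h_q(-t\ln q)$, so your three ingredients ($Q(1)=0$, $Q'(1)>0$, $Q''(1)\ge 0$ together with $Q''$ increasing) are precisely the paper's ($h_q(-\ln q)=0$, $h'_q(-\ln q)<0$, $h''_q(-\ln q)<0$ together with $h'''_q<0$), and your convexity argument is the mirror image of the paper's concavity argument. The one place where you genuinely deviate is the ``only if'' direction: the paper uses the recurrences \eqref{2.7} and \eqref{2.7'} to show $g_{q,c}(x+1)-g_{q,c}(x)>0$ for large $x$, violating the monotonicity forced by complete monotonicity, whereas you read off from the series already in hand that $g_{q,c}(x)=q^x(c_1(c)+o(1))<0$ for large $x$, violating nonnegativity. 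Your version is slightly more economical, since it needs no recurrence formulas and reuses the expansion from the ``if'' part; both are sound, resting on the same fact that the first-order coefficient degenerates exactly at $c=0$.

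One caution on the step you rightly single out as the main obstacle, the inequality $Q''(1)\ge 0$, i.e. $2(e^{-u}-1+u)-u^2e^{-u}\ge 0$ for $u>0$: the plain ``differentiate at $u=0$'' scheme you propose stalls here, because the second derivative of this function equals $u(4-u)e^{-u}$, which changes sign at $u=4$. The clean fix is to multiply through by $e^{u}$ first: the function $G(u)=2(1-e^{u}+ue^{u})-u^{2}$ satisfies $G(0)=0$ and $G'(u)=2u(e^{u}-1)\ge 0$, which settles it. (The paper asserts the equivalent inequality $\frac{q(\ln q)^2}{2}+\ln q+1-q<0$ with no proof at all, so this is not a gap relative to the paper's standard, but the extra twist is needed to make your sketch airtight; your other auxiliary inequality, $u+(u+2)e^{-u}-2>0$, does yield to the direct method, since its second derivative is $ue^{-u}>0$.)
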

\begin{proof}
    We have, using \eqref{1.12}, that
\begin{align*}
    g_{q,c}(x)= \ln q \sum^{\infty}_{n=1}\frac {q^{nx}}{1-q^n}\left (1+\frac {1-q^n}{n\ln q}+a_q(\ln q^n)q^{nc} \right
   ).
 \end{align*}
    On setting $t=-\ln q^n$, we have $t \geq -\ln q$ and the expression in
    the parenthesis above when $c=0$ can be rewritten as
 \begin{align*}
    1-\frac {1-e^{-t}}{t}-a_qt=\frac {1}{t}(-1+t+e^{-t}-a_qt^2):=h_q(t)/t.
 \end{align*}
     It suffices to show $h_q(t) \leq 0$ for $t \geq -\ln q$. For this,
     note that $h_q(-\ln q)=0$ and that
 \begin{align*}
    h'_q(t)=1-2a_qt-e^{-t}, \ h''_q(t)=-2a_q+e^{-t}.
 \end{align*}
     We have
 \begin{align*}
    \frac {(\ln q)^2h''_q(-\ln q)}{2}=\frac {q(\ln q)^2}{2}+\ln
    q+1-q,
 \end{align*}
    and the right-hand side expression above
    is easily seen to be $<0$ for $0<q<1$. As $h^{(3)}_q(t)<0$ for $t \geq -\ln q$,
    we conclude that $h''_q(t)<0$ for $t \geq -\ln q$. It's also
    easy to see that $h'_q(-\ln q)<0$ and we then deduce that $h'_q(t)<0$ for $t \geq -\ln
    q$ and this implies $h_q(t) \leq 0$ for $t \geq -\ln q$, which
    completes the proof of the ``if" part of the assertion of the
    theorem.

     For the ``only
     if" part of the assertion of the theorem, note that we have
     by \eqref{2.7} and \eqref{2.7'},
 \begin{align*}
    g_{q,c}(x+1)-g_{q,c}(x)=-\frac {(\ln q)q^x}{1-q^x}-\ln\frac
    {1-q^{x+1}}{1-q^{x}}-a_q\frac {(\ln
    q)^2q^{x+c}}{(1-q^{x+c})^2}.
 \end{align*}
    If we set $z=q^x$ and consider the Taylor expansion of the
    above expression at $z=0$, then the first order term is:
 \begin{align*}
    (-\ln q+q-1-a_q(\ln q)^2q^c)z=(h_q(-\ln q)+a_q(\ln q)^2-a_q(\ln
    q)^2q^c)t>0,
 \end{align*}
    if $c>0$. This implies that $g_{q,c}(x+1)>g_{q,c}(x)$ when $x$
    is large enough and this shows that $g_{q,c}(x)$ can't be
    completely monotonic on $(0, +\infty)$ when $c>0$ and this
    completes the proof of the ``only if" part of the assertion of
    the theorem.
 \end{proof}

    Similar to Theorem \ref{thm3}, one can prove the following
    result, whose proof we leave to the reader.
\begin{theorem}
 \label{thm2} Let $0<q<1$ be fixed. Let $c \geq 0$. The function
 \begin{align*}
    x \mapsto \psi_q(x)-\ln\frac {1-q^x}{1-q}+\frac {1}{2}\psi'_q(x+c)
 \end{align*}
    is completely monotonic on $(0, +\infty)$ if $c =0 $ and its negative is completely monotonic
on $(0, +\infty)$ if $c
    \geq 1/3$.
\end{theorem}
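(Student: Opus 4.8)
The plan is to follow the proof of Theorem~\ref{thm3}, noting that here only the two ``if'' directions are asserted, so no ``only if'' argument (and no Taylor expansion of $g_{q,c}(x+1)-g_{q,c}(x)$) is needed. Writing $G_{q,c}(x)=\psi_q(x)-\ln\frac{1-q^x}{1-q}+\frac12\psi'_q(x+c)$, I would first use \eqref{1.12} together with $\ln(1-q^x)=-\nsum q^{nx}/n$ to obtain
\[
G_{q,c}(x)=\ln q\,\nsum\frac{q^{nx}}{1-q^n}\left(1+\frac{1-q^n}{n\ln q}+\frac12(\ln q^n)q^{nc}\right).
\]
Since $(-1)^k(q^{nx})^{(k)}=(n|\ln q|)^kq^{nx}\ge 0$, each $x\mapsto q^{nx}$ is completely monotonic, so $G_{q,c}$ (resp.\ $-G_{q,c}$) is completely monotonic once the parenthesis is $\le 0$ (resp.\ $\ge 0$) for every $n\ge 1$; here $\ln q<0$ and $q^{nx}/(1-q^n)>0$. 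Setting $t=-\ln q^n=-n\ln q$ (so $t\ge-\ln q>0$), the parenthesis equals $H_c(t)/t$ with $H_c(t)=t-1+e^{-t}-\frac{t^2}{2}e^{-ct}$, so both assertions reduce to sign statements for $H_c$ on $t\ge-\ln q$, in fact on all of $(0,\infty)$.

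For $c=0$ I would show $H_0(t)=t-1+e^{-t}-\frac{t^2}{2}\le 0$: one has $H_0(0)=H_0'(0)=0$ and $H_0''(t)=e^{-t}-1<0$ for $t>0$, whence $H_0'<0$ and then $H_0<0$, giving the complete monotonicity of $G_{q,0}$. For $c\ge 1/3$ I need $H_c(t)\ge 0$; since $\partial_c H_c(t)=\frac{t^3}{2}e^{-ct}>0$, the map $c\mapsto H_c(t)$ is increasing, so it suffices to treat $c=1/3$.

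The hard part will be $H_{1/3}(t)\ge 0$; a direct derivative analysis is awkward because the derivatives of $H_{1/3}$ at $0$ change sign (one checks $H_{1/3}^{(4)}(0)>0$ but $H_{1/3}^{(5)}(0)<0$). The trick I would use is to clear the exponential by setting
\[
P(t)=e^{t/3}H_{1/3}(t)=(t-1)e^{t/3}+e^{-2t/3}-\frac{t^2}{2},
\]
which has the same sign as $H_{1/3}$. A direct computation gives $P(0)=P'(0)=P''(0)=0$ and
\[
P'''(t)=\frac1{27}\left(e^{t/3}(t+8)-8e^{-2t/3}\right),
\]
and $P'''(t)>0$ for $t>0$ is equivalent to the elementary inequality $e^t(t+8)>8$. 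Integrating successively from $0$ then yields $P''>0$, $P'>0$, $P>0$ on $(0,\infty)$, hence $H_{1/3}>0$ and therefore $H_c\ge H_{1/3}>0$ for $c\ge 1/3$. This makes the parenthesis positive for every $n$, so $-G_{q,c}$ is completely monotonic. The only genuine obstacle is the inequality $H_{1/3}\ge 0$, and the $e^{t/3}$-substitution is precisely what renders its third derivative manifestly positive.
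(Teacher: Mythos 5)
Your proof is correct and is exactly the route the paper intends: the paper gives no separate proof of Theorem \ref{thm2} (it is left to the reader as ``similar to Theorem \ref{thm3}''), and your series expansion via \eqref{1.12} with the substitution $t=-\ln q^n$ is the verbatim analogue of the proof of Theorem \ref{thm3}, with $a_q$ replaced by $\tfrac12$ and the factor $q^{nc}$ retained, so that both assertions reduce to the sign of $H_c(t)=t-1+e^{-t}-\tfrac{t^2}{2}e^{-ct}$. The one genuinely new ingredient, the threshold inequality $H_{1/3}(t)\ge 0$ for $t>0$, you establish correctly (I checked $P(0)=P'(0)=P''(0)=0$ and $P'''(t)=\tfrac{1}{27}\left(e^{t/3}(t+8)-8e^{-2t/3}\right)>0$), and your device of multiplying by $e^{t/3}$ before differentiating is the same style of argument the paper itself uses to prove Lemma \ref{lem2}; the reduction to $c=1/3$ via monotonicity of $c\mapsto H_c(t)$ is also sound.
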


   Related to the function given in \eqref{4.1}, we have the following
   $q$-analogue:
\begin{theorem}
\label{thm1}
  Let $0<q<1$ be fixed, the functions
\begin{eqnarray}
\label{3.0}
  && -\psi_q(x)+\ln \Big ( \frac {1-q^x}{1-q} \Big )+ \frac {(\ln q) q^x }{2(1-q^x)}+\frac 1{12}\psi''_q(x+1/2),\\
\label{3.0'}
  && \psi_q(x)-\ln \Big ( \frac {1-q^x}{1-q} \Big )-\frac {(\ln q) q^x }{2(1-q^x)}-\frac
  1{12}\psi''_q(x)
\end{eqnarray}
  are completely monotonic on $(0, +\infty)$.
\end{theorem}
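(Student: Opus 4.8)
The plan is to expand each of the two functions into a series of the form $\sum_{n=1}^{\infty} c_n q^{nx}$ and then to verify that every coefficient $c_n$ is positive. This suffices: since $0<q<1$, each function $x \mapsto q^{nx}$ is completely monotonic on $(0,+\infty)$, because $(-1)^k (q^{nx})^{(k)} = (-n\ln q)^k q^{nx} \geq 0$ for all $k \geq 0$; hence any such series with nonnegative coefficients is completely monotonic, the exponential decay afforded by $0<q<1$ legitimizing the termwise differentiation and rearrangement.

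First I would record the series expansions of the constituent pieces. From \eqref{1.12} we have $\psi_q(x)=-\ln(1-q)+\ln q\sum_{n=1}^{\infty} q^{nx}/(1-q^n)$, and differentiating twice gives $\psi''_q(x)=(\ln q)^3\sum_{n=1}^{\infty} n^2 q^{nx}/(1-q^n)$, whence $\psi''_q(x+1/2)=(\ln q)^3\sum_{n=1}^{\infty} n^2 q^{n/2} q^{nx}/(1-q^n)$. For the elementary terms I would use the logarithmic expansion $\ln\frac{1-q^x}{1-q}=-\ln(1-q)-\sum_{n=1}^{\infty} q^{nx}/n$ and the geometric expansion $q^x/(1-q^x)=\sum_{n=1}^{\infty} q^{nx}$.

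Substituting these into \eqref{3.0'} and collecting terms, the logarithmic constants cancel and the coefficient of $q^{nx}$ becomes exactly
\[
\frac {\ln q}{1-q^n}+\frac 1{n}-\frac {\ln q}{2}-\frac {n^2(\ln q)^3}{12(1-q^n)},
\]
which is positive by inequality \eqref{2.6} of Lemma \ref{lem2}; this proves \eqref{3.0'} completely monotonic. Carrying out the same substitution in \eqref{3.0} --- where the factor $q^{n/2}$ from the shifted $\psi''_q(x+1/2)$ now appears --- again cancels the constants and yields the coefficient of $q^{nx}$ equal to
\[
-\left(\frac {\ln q}{1-q^n}+\frac 1{n}-\frac {\ln q}{2}-\frac {n^2(\ln q)^3q^{n/2}}{12(1-q^n)}\right),
\]
which is the negative of the left-hand side of \eqref{2.5} and so is positive, again by Lemma \ref{lem2}. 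Hence \eqref{3.0} is completely monotonic as well.

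The analytic substance --- the sign of these per-term coefficients --- is exactly what Lemma \ref{lem2} was built to supply, so no further hard estimate remains. The only real care lies in the bookkeeping: confirming that the two collected coefficients match \eqref{2.6} and \eqref{2.5} verbatim, that the $\ln(1-q)$ constants cancel in each case, and in particular observing that it is precisely the shift of the argument of $\psi''_q$ by $1/2$ (producing the $q^{n/2}$ factor) that routes \eqref{3.0} to \eqref{2.5}, while the unshifted $\psi''_q(x)$ in \eqref{3.0'} routes it to \eqref{2.6}.
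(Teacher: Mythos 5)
Your proposal is correct and follows essentially the same route as the paper: expand everything via \eqref{1.12} (together with the geometric and logarithmic series) into a series $\sum_n c_n q^{nx}$, and verify the sign of each coefficient by \eqref{2.5} and \eqref{2.6} of Lemma \ref{lem2}, with the $q^{n/2}$ factor from the shift by $1/2$ routing \eqref{3.0} to \eqref{2.5} exactly as in the paper's displayed identity. Your added remarks justifying termwise differentiation and the complete monotonicity of $q^{nx}$ merely make explicit what the paper leaves implicit.
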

\begin{proof}
  The function given in \eqref{3.0} being completely monotonic on $(0, +\infty)$ follows from \eqref{2.5} and \eqref{1.12}. As by \eqref{1.12}, we have
\begin{equation*}
  \psi_q(x)-\ln \Big ( \frac {1-q^x}{1-q} \Big )- \frac {(\ln q) q^x }{2(1-q^x)}-\frac 1{12}\psi''_q(x+1/2) \\
  = \sum^{\infty}_{n=1}\Big (\frac {\ln q}{1-q^n}+\frac 1{n}-\frac {\ln q}{2}-\frac {n^2(\ln q)^3q^{n/2}}{12(1-q^n)} \Big )q^{nx}.
\end{equation*}
  Similarly, the function given in \eqref{3.0'} being completely monotonic on $(0, +\infty)$ follows from \eqref{2.6} and \eqref{1.12}.
\end{proof}

   Our next result is motivated by \eqref{1.1} and \eqref{1.2}:
\begin{theorem}
\label{thm1.0}
  Let $0<q<1$ be fixed, the functions
\begin{eqnarray}
\label{3.1}
&& \psi'_q(x)-\frac {(\ln q)^2q^x}{(1-q)(1-q^x)}-\frac {(\ln q)^2q^{2x}}{(1+q)(1-q^x)^2}, \\
\label{3.2} &&  -\psi'_q(x+1/2)+\frac {(\ln
q)^2q^{x+1/2}}{(1-q)(1-q^x)}
\end{eqnarray}
  are completely monotonic on $(0, +\infty)$.
\end{theorem}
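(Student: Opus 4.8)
The plan is to reduce both assertions to the nonnegativity of the coefficients in a power-series expansion in $q^{nx}$, exploiting that each $q^{nx}=e^{nx\ln q}$ is completely monotonic on $(0,+\infty)$ (since $\ln q<0$), so that any series $\sum_{n\ge 1}a_n q^{nx}$ with $a_n\ge 0$ is completely monotonic. Differentiating \eqref{1.12} gives $\psi'_q(x)=(\ln q)^2\sum_{n=1}^{\infty}\frac{n\,q^{nx}}{1-q^n}$, and I would expand the rational terms by geometric series, using $\frac{q^x}{1-q^x}=\sum_{n=1}^{\infty}q^{nx}$ and $\frac{q^{2x}}{(1-q^x)^2}=\sum_{n=2}^{\infty}(n-1)q^{nx}$, both valid for $x>0$. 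Substituting these into \eqref{3.1} and \eqref{3.2} and collecting the coefficient of $q^{nx}$ turns each statement into an elementary inequality in $n$ and $q$, in the same spirit as the passage from Lemma \ref{lem2} to Theorem \ref{thm1}.

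For \eqref{3.2}, writing $\psi'_q(x+1/2)=(\ln q)^2\sum_{n\ge 1}\frac{n\,q^{n/2}q^{nx}}{1-q^n}$ shows that the coefficient of $q^{nx}$, up to the positive factor $(\ln q)^2$, is $\frac{q^{1/2}}{1-q}-\frac{n\,q^{n/2}}{1-q^n}$. Its nonnegativity is equivalent to $\sum_{k=0}^{n-1}q^{k+1/2}\ge n\,q^{n/2}$, which is precisely the AM--GM inequality applied to $q^{1/2},q^{3/2},\dots,q^{n-1/2}$, whose geometric mean is $q^{(n^2/2)/n}=q^{n/2}$. This case is therefore immediate, with equality at $n=1$.

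For \eqref{3.1} the coefficient of $q^{nx}$ is $\frac{n}{1-q^n}-\frac{1}{1-q}-\frac{n-1}{1+q}$, which vanishes at $n=1$ and must be shown nonnegative for $n\ge 2$. Clearing the positive denominator $(1-q)(1+q)(1-q^n)$ reduces this to $P(q):=(n-2)-nq+nq^{n-1}-(n-2)q^n\ge 0$ on $(0,1)$. I expect this to be the main obstacle, because the inequality is sharp: $P(1)=0$ and $P\equiv 0$ when $n=2$, so a termwise comparison of the two power series fails and one is forced into a derivative argument. The plan is to record $P(0)=n-2\ge 0$ and $P(1)=0$, and to write $P'(q)=n\,Q(q)$ with $Q(q)=-1+(n-1)q^{n-2}-(n-2)q^{n-1}$; then $Q(1)=0$ and $Q'(q)=(n-1)(n-2)q^{n-3}(1-q)\ge 0$, so $Q$ is increasing and hence $Q\le 0$ on $(0,1)$. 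Consequently $P$ is decreasing on $(0,1)$ with $P(1)=0$, which yields $P\ge 0$ there and completes the verification of nonnegativity for \eqref{3.1}, establishing complete monotonicity of both functions.
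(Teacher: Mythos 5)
Your proposal is correct and takes essentially the same route as the paper: the identical expansion of both functions into series $\sum_n a_n q^{nx}$ with the same coefficients, the same AM--GM argument for \eqref{3.2}, and for \eqref{3.1} the same polynomial inequality (your $P(q)$ equals the paper's $u_n(q)/q$), which the paper also settles by an elementary monotonicity argument (showing $q\mapsto 2\sum_{i=0}^{n-1}q^i-nq^{n-1}$ is increasing on $(0,1]$ and evaluating at $q=1$). Your two-step derivative verification that $P\ge 0$ is a sound, slightly more explicit substitute for that final step.
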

\begin{proof}
  To show the function given in \eqref{3.1} is completely monotonic on $(0, +\infty)$, we note that
\begin{equation*}
  \frac {q^x}{(1-q^x)^2}=\sum^{\infty}_{n=1}nq^{nx}.
\end{equation*}
    Using this and \eqref{1.12}, we can recast \eqref{3.1} as
\begin{eqnarray*}
  && \psi'_q(x)-\frac {(\ln q)^2q^x}{(1-q)(1-q^x)}-\frac {(\ln q)^2q^{2x}}{(1+q)(1-q^x)^2} \\
&=& (\ln q)^2 \Big ( \sum^{\infty}_{n=1}\frac {nq^{nx}}{1-q^n}-
\frac {1}{1-q}\sum^{\infty}_{n=1}q^{nx}
- \frac {q^x}{1+q}\sum^{\infty}_{n=1}nq^{nx} \Big ) \\
  &=& (\ln q)^2 \sum^{\infty}_{n=2} \Big (\frac {n}{1-q^n}-\frac {1}{1-q}-\frac {n-1}{1+q} \Big )q^{nx}=(\ln q)^2 \sum^{\infty}_{n=2} \Big (\frac {u_{n}(q)}{(1-q^n)(1-q)(1+q)} \Big )q^{nx},
\end{eqnarray*}
  where
\begin{equation*}
   u_n(q)=n(1-q^2)-(1+q)(1-q^n)-(n-1)(1-q)(1-q^n)=n(1-q)(q+q^{n})-2q(1-q^n).
\end{equation*}
  It suffices to show that $u_n(q) \geq 0$ for $n \geq 2$, $0<q<1$, or equivalently,
\begin{equation*}
  n(1+q^{n-1}) \geq 2 \frac {1-q^n}{1-q}=2\sum^{n-1}_{i=0}q^i.
\end{equation*}
 It is easy to see that the function $q \mapsto 2\sum^{n-1}_{i=0}q^i-nq^{n-1}$ is an increasing function of $0<q \leq 1$
 and on considering the value of this function at $q=1$, we see that it implies $u_n(q) \geq 0$ for $n \geq 2$, $0<q<1$
 and this establishes our assertion on the function given in \eqref{3.1}.

  To show the function given in \eqref{3.2} is completely monotonic on $(0, +\infty)$, we use \eqref{1.12} to get
\begin{equation*}
   \psi'_q(x+1/2)-\frac {(\ln q)^2q^{x+1/2}}{(1-q)(1-q^x)}
  = (\ln q)^2 \sum^{\infty}_{n=1} \Big (\frac {nq^{n/2}}{1-q^n}-\frac {q^{1/2}}{1-q} \Big )q^{nx}.
\end{equation*}
   It suffices to show that $nq^{n/2-1/2} \leq (1-q^n)/(1-q)=\sum^{n-1}_{i=0}q^i$ for $0<q<1$. This follows by noting that $2\sum^{n-1}_{i=0}q^i=\sum^{n-1}_{i=0}(q^i+q^{n-i-1})$ and that $q^i+q^{n-i-1} \geq 2q^{n/2-1/2}$ by the arithmetic-geometric inequality and this completes the proof.
\end{proof}

\begin{cor}
\label{cor3.2} Let $0<q<1$ be fixed, then for $x>0$, we have
\begin{align*}
   \psi'_q(x+1/2) \leq \frac {(\ln
q)^2q^{x+1/2}}{(1-q)(1-q^x)}.
\end{align*}
\end{cor}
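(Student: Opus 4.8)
The plan is to read off the claimed inequality as the zeroth-order instance of the complete monotonicity already established in Theorem \ref{thm1.0}. Recall that by definition a function $f$ is completely monotonic on $(0,+\infty)$ precisely when $(-1)^k f^{(k)}(x)\ge 0$ for all $k\ge 0$ and all $x>0$; in particular the case $k=0$ forces $f(x)\ge 0$ throughout $(0,+\infty)$. So nonnegativity of the function is automatically packaged inside the stronger statement of Theorem \ref{thm1.0}.

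Concretely, I would invoke the second assertion of Theorem \ref{thm1.0}, which states that the function in \eqref{3.2}, namely
\begin{align*}
    -\psi'_q(x+1/2)+\frac {(\ln q)^2q^{x+1/2}}{(1-q)(1-q^x)},
\end{align*}
is completely monotonic on $(0,+\infty)$. Taking $k=0$ in the definition of complete monotonicity shows that this function is $\ge 0$ for every $x>0$, and rearranging the resulting inequality gives exactly
\begin{align*}
   \psi'_q(x+1/2) \leq \frac {(\ln q)^2q^{x+1/2}}{(1-q)(1-q^x)},
\end{align*}
which is the claim.

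There is no genuine obstacle to surmount here: the corollary is a direct specialization, and the only ``work'' is to note that complete monotonicity subsumes nonnegativity. The entire content of the bound was already secured in the proof of Theorem \ref{thm1.0}, where the function in \eqref{3.2} was expanded via \eqref{1.12} into $(\ln q)^2\sum_{n\ge 1}\bigl(q^{1/2}/(1-q)-nq^{n/2}/(1-q^n)\bigr)q^{nx}$ and each coefficient was shown nonnegative through the arithmetic–geometric mean estimate $q^i+q^{n-i-1}\ge 2q^{n/2-1/2}$. Thus the proof of the corollary amounts to a single sentence citing that theorem.
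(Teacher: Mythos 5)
Your proof is correct and takes essentially the same route as the paper: both deduce the corollary directly from the complete monotonicity of the function in \eqref{3.2} established in Theorem \ref{thm1.0}. The only cosmetic difference is that the paper extracts nonnegativity by letting $x \rightarrow +\infty$ (a completely monotonic function is decreasing and this one has limit $0$ at infinity), while you invoke the $k=0$ case of the definition of complete monotonicity, which is equally valid under the definition the paper uses.
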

  The above inequality follows readily from Theorem \ref{thm1.0}
  on considering the value of the function given in \eqref{3.2} as
  $x \rightarrow +\infty$. As it's easy to see that $-(\ln
  q)q^{1/2} < 1-q$ when $0<q<1$, the above inequality gives a
  refinement of inequality \eqref{1.1}.


\end{document}